\documentclass[12]{amsart}
\usepackage{amsmath,amssymb,amsthm,color,enumerate,comment,centernot,enumitem,url,cite}
\usepackage{graphicx,relsize,bm}
\usepackage{mathtools}
\usepackage{array}

\usepackage[backref=page]{hyperref}
\usepackage{hyperref}

\makeatletter
\newcommand{\tpmod}[1]{{\@displayfalse\pmod{#1}}}
\makeatother

\newtheorem{thm}{Theorem}[section]
\newtheorem{lemma}[thm]{Lemma}

\theoremstyle{remark}

\theoremstyle{definition}

\theoremstyle{THM}

\newcommand{\W}{{\mathcal W}}

\newcommand{\FF}{{\mathcal F}}
\newcommand{\Gal}{{\mbox{{\rm{Gal}}}}}

\newcommand{\mmod}[1]{\ \mathrm{mod}\enspace #1}
\newcommand{\Z}{{\mathbb Z}}
\newcommand{\Q}{{\mathbb Q}}

\makeatletter
\@namedef{subjclassname@2020}{%
  \textup{2020} Mathematics Subject Classification}
\makeatother

\def\red#1 {\textcolor{red}{#1 }}
\def\blue#1 {\textcolor{blue}{#1 }}

\numberwithin{equation}{section}

\begin{document}

\title[Abelian Monogenic Trinomials]{A Note on Abelian Monogenic Trinomials}


\author{Lenny Jones}
\address{Professor Emeritus, Department of Mathematics, Shippensburg University, Shippensburg, Pennsylvania 17257, USA}
\email[Lenny~Jones]{doctorlennyjones@gmail.com}

\date{\today}

\begin{abstract}
     An {\em abelian monogenic} polynomial $f(x)\in {\mathbb Z}[x]$ is a monic polynomial of degree $N$ that is irreducible over ${\mathbb Q}$, such that the Galois group of $f(x)$ over $\Q$ is abelian, and   
    $\{1,\theta,\theta^2,\ldots,\theta^{N-1}\}$
    is a basis for the ring of integers of ${\mathbb Q}(\theta)$, where $f(\theta)=0$. In this article, we determine all abelian monogenic trinomials of the form $x^{2n}+ax^{n}+b$, where $n,a,b\in {\mathbb Z}$ with $n\ge 1$ and $ab\ne 0$.  
\end{abstract}

\subjclass[2020]{Primary 11R09; Secondary 11R21, 11R32}
\keywords{abelian, Galois, monogenic, trinomial, power-compositional}

\maketitle
\section{Introduction}\label{Section:Intro}

We say that a monic polynomial $f(x)\in \Z[x]$ is {\em monogenic} if $f(x)$ is irreducible over $\Q$ and  $\{1,\theta,\theta^2,\ldots,\theta^{\deg(f)-1}\}$
  is a basis for the ring of integers, $\Z_K$, of $K={\mathbb Q}(\theta)$, where $f(\theta)=0$. 
  It is well known \cite{Cohen} that 
  \begin{equation} \label{Eq:Dis-Dis}
\Delta(f)=\left[\Z_K:\Z[\theta]\right]^2\Delta(K),
\end{equation} where $\Delta(f)$ and $\Delta(K)$ denote the discriminants over $\Q$, respectively, of $f(x)$ and the number field $K$. We refer to $\left[\Z_K:\Z[\theta]\right]$ as the {\em index} of $f(x)$, and we denote it index($f$). 
Thus, from \eqref{Eq:Dis-Dis}, $f(x)$ is monogenic if and only if $\Delta(f)=\Delta(K)$ or equivalently, index($f$)=1.
 We say that $f(x)$ is {\em abelian monogenic} if $f(x)$ is monogenic and the Galois group of $f(x)$ over $\Q$, denoted $\Gal(f)$, is abelian.  

The investigations in this note were motivated by the following work of Marie-Nicole Gras.
\begin{thm}{\rm \cite{Gras1}}\label{Thm:Gras1}
 Let $N\ge 5$ be an integer with $\gcd(N,6)=1$. There exist only finitely many
abelian extensions of $\Q$ of degree $N$ whose ring of algebraic integers is monogenic.  
\end{thm}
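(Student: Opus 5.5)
This is Gras's theorem, which the paper cites rather than proves. I would follow her strategy: combine the Gauss-sum/conductor description of abelian fields with a unit-type equation coming from monogenicity.

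\textbf{Reduction to cyclic fields of prime degree.} Let $K$ be abelian of degree $N$ with $\gcd(N,6)=1$ and $N\ge5$, and suppose $\Z_K=\Z[\theta]$. Since $N$ is odd, $K$ is totally real. Choose a prime $\ell\mid N$ with $\ell\ge 5$. By Galois theory there is a subfield $L\subseteq K$ that is cyclic of degree $\ell$ over $\Q$.

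Monogenicity does not pass directly to subfields. However, the index form of $K$ factors through the relative norms. For every $\sigma\ne 1$ in the Galois group, $\theta-\sigma(\theta)$ is, up to a unit, a product of relative differents, and $\Z_K=\Z[\theta]$ forces strong divisibility constraints on the conductor. First I would show that every prime $p$ ramified in $K$ is totally ramified in the cyclic pieces. Then I would show that the conductor $f$ of $K$ must satisfy $\Delta(K)=\prod_{\sigma\ne1}(\theta-\sigma\theta)$ exactly.

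\textbf{Key step: an index-form (unit) equation.} For $\sigma$ generating $\Gal(L/\Q)$, put $\varepsilon_\sigma=(\theta-\sigma\theta)/\delta_\sigma$, where $\delta_\sigma$ is a fixed generator-type element built from the Gauss period/different of the conductor. Monogenicity implies that each $\varepsilon_\sigma$ is a unit of $K$. These units satisfy the additive relations
\[
(\theta-\sigma\theta)+(\sigma\theta-\sigma^2\theta)=\theta-\sigma^2\theta,
\]
which give $S$-unit equations $\alpha\varepsilon_1+\beta\varepsilon_2=\gamma\varepsilon_3$ with coefficients depending only on the conductor. By Baker's theory, or the finiteness theorem for unit equations (Siegel–Mahler/Evertse), a fixed field $K$ has only finitely many solutions. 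That handles finiteness of generators, but we need finiteness of fields.

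\textbf{Main obstacle: bounding the conductor.} Gras's key idea is arithmetic rather than Diophantine. Take a rational prime $q$ that splits completely in $K$. Then $\theta$ modulo the primes above $q$ gives $N$ distinct residues in $\mathbb F_q$, so $q\ge N$. Using the action of $\sigma$ of order $\ell\ge5$ on the unit group modulo $q$, Gras shows that the unit $\varepsilon_\sigma$ satisfies $\varepsilon_\sigma\equiv$ an $\ell$-th root of unity type congruence. This forces the conductor $f$ to satisfy $f\equiv\pm1$ conditions and eventually a bound $f\le C(N)$.

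I would try to reproduce this as follows:
\begin{itemize}
\item Write $\theta-\sigma^k\theta=\varepsilon_k\delta_k$.
\item Use the relations among $\varepsilon_1,\ldots,\varepsilon_{\ell-1}$ forced by $\sigma$-equivariance; here $\gcd(N,6)=1$ excludes the degenerate small cases $\ell=2,3$, where cyclotomic units satisfy extra relations.
\item Derive that $\varepsilon_k$ are roots of unity times a fixed unit, whence $\pm1$ since $K$ is real.
\item Conclude $\theta-\sigma\theta$ generates a fixed ideal of norm dividing a bounded power of $N$, giving $\Delta(K)$ bounded.
\end{itemize}
By Hermite's theorem there are then finitely many such $K$. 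The step deducing that the units are forced to be $\pm1$ is where I expect the real difficulty, and where Gras's results on the Galois-module structure of the units are indispensable.
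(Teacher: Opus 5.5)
The paper gives no proof of this theorem; it only quotes Gras. Taken on its own terms, your proposal has a genuine gap at the step that carries the theorem: nothing in it bounds the primes that ramify in $K$.

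\textbf{What fails.} The observation that a completely split prime $q$ satisfies $q\ge N$ says nothing about ramification. The unit-equation finiteness concerns generators of a fixed field, as you note yourself. The ``reduction'' to a cyclic subfield $L$ of degree $\ell$ reduces nothing: monogenicity does not descend to $L$, and finiteness of the possible $L$ would not give finiteness of $K$.

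More seriously, the mechanism in your bullet points is false. Take $K=\Q(\zeta)^+$ with $\zeta=\zeta_{11}$. It is cyclic of degree $5$, with $\Z_K=\Z[\theta]$ for $\theta=\zeta+\zeta^{-1}$; let $\sigma:\zeta\mapsto\zeta^2$. Then $\theta-\sigma\theta=\zeta(1-\zeta)(1-\zeta^{-3})$ generates the prime above $11$, whose norm $11$ divides no power of $N=5$. Also $(\theta-\sigma^2\theta)/(\theta-\sigma\theta)=(1-\zeta^3)(1-\zeta^{-5})/\bigl((1-\zeta)(1-\zeta^{-3})\bigr)$ has absolute value $\sin(5\pi/11)/\sin(\pi/11)\ne 1$, so it is not $\pm1$. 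If your steps worked, they would show that no abelian monogenic field of degree prime to $6$ exists at all.

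\textbf{The missing idea} is a congruence at a \emph{ramified} prime, not a split one. Let $p\nmid N$ ramify in $K$, let $P\mid p$, and let $\sigma$ have prime order $\ell$ in the inertia group of $P$. Then $\ell\mid N$, so $\ell\ge 5$.
\begin{enumerate}
\item Because $\Z_K=\Z[\theta]$, the ideal $(\theta-\sigma^j\theta)$ equals the intrinsic ideal generated by all $x-\sigma^j x$ with $x\in\Z_K$. This ideal depends only on which ramification groups contain $\sigma^j$, hence only on $\langle\sigma^j\rangle=\langle\sigma\rangle$, so it is the same for $j=1,2$. Therefore $u=(\theta-\sigma^2\theta)/(\theta-\sigma\theta)$ is a unit.
\item The ramification is tame. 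So $v_P(\theta-\sigma\theta)=1$, $\sigma$ acts trivially on $\Z_K/P$, and $\sigma$ acts on $P/P^2$ by a primitive $\ell$-th root of unity $\omega$, which lies in $\mathbb{F}_p$ since $K/\Q$ is abelian.
\item Writing $y=\sigma\theta-\theta$, you get $u=(y+\sigma y)/y\equiv 1+\omega\pmod{P}$. The same congruence holds for every conjugate $\tau(u)$: run the argument with the generator $\tau\theta$.
\item Taking norms gives $(1+\omega)^N\equiv\pm1\pmod{p}$. Hence $p$ divides the integer $N_{\Q(\zeta_\ell)/\Q}\bigl((1+\zeta_\ell)^{2N}-1\bigr)$. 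This integer is nonzero because $|1+\zeta_\ell|=2\cos(\pi/\ell)\ne 1$ for $\ell\ge5$, so $1+\zeta_\ell$ is not a root of unity.
\end{enumerate}
For $\ell=3$, $1+\zeta_3$ is a root of unity and the argument gives nothing; this is exactly where $\gcd(N,6)=1$ is used.

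So every ramified prime lies in a finite set depending only on $N$. Tame primes contribute conductor exponent $1$, and wild exponents at $p\mid N$ are bounded in terms of $v_p(N)$, so the conductor is bounded. Kronecker--Weber, or Hermite's theorem, then gives finiteness.
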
 

\begin{thm}{\rm \cite{Gras2}}\label{Thm:Gras2}
Let $\ell\ge 5$ be prime. Then there exists at most one cyclic extension $K/\Q$ of degree $\ell$. Moreover, if $2\ell+1$ is prime, then $K$ is the maximal real subfield of the cyclotomic field of index $2\ell+1$, and if $2\ell+1$ is not prime, then no such cyclic extension exists. 
\end{thm}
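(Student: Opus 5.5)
The plan is to follow Gras's strategy: reduce monogenity to the existence of special units, then show those units can exist only when the conductor is the prime $2\ell+1$. I read the statement as concerning cyclic extensions $K/\Q$ of degree $\ell$ whose ring of integers is monogenic, as in Theorem~\ref{Thm:Gras1}. The step that pins down the conductor is only sketched, and I flag below where the plan is least secure.

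\textbf{Step 1: arithmetic of $K$.} Fix $K$ cyclic of degree $\ell$ with $\Z_K=\Z[\theta]$, and let $\sigma$ generate $\Gal(K/\Q)$. By Kronecker--Weber and the conductor--discriminant formula, the conductor $\mathfrak f$ is a product of distinct primes $p\equiv 1 \pmod{\ell}$, possibly times $\ell^2$. Moreover $\Delta(K)=\mathfrak f^{\ell-1}$, and every prime dividing $\mathfrak f$ is totally ramified in $K$.

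\textbf{Step 2: factor the discriminant.} Group the differences of conjugates to write
\[
\Delta(\theta)=\pm\prod_{k=1}^{(\ell-1)/2} N_{K/\Q}\bigl(\theta-\sigma^k\theta\bigr)^2 .
\]
For each ramified prime $P\mid p$, $\sigma$ acts trivially on $\Z_K/P$, so $\theta-\sigma^k\theta\in P$ for every $k$. Hence $p\mid N(\theta-\sigma^k\theta)$ for every $p\mid \mathfrak f$, with extra divisibility at $\ell$ if $\ell^2\mid\mathfrak f$. Comparing with $\Delta(\theta)=\Delta(K)=\mathfrak f^{\ell-1}$ forces equality everywhere. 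This should first rule out $\ell\mid\mathfrak f$ by a valuation count at $\ell$. It then gives $(\theta-\sigma^k\theta)=\prod_{p\mid \mathfrak f}P_p$ as ideals for all $k$. In particular all conjugate differences are associates of $\eta:=\theta-\sigma\theta$.

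\textbf{Step 3: units.} Set $u=\sigma(\eta)/\eta$. Then $u$ is a unit and
\[
\frac{\theta-\sigma^k\theta}{\eta}=1+u+\cdots+u^{k-1}
\]
is a unit for every $1\le k\le \ell-1$. So $u$ is an exceptional-type unit: $u$ and $1+u$ are both units, and $N(u)=1$.

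\textbf{Step 4: reduce modulo a ramified prime.} Fix $p\mid\mathfrak f$ and $P\mid p$. Since $\eta$ is a uniformizer at $P$ and the ramification is tame, $u\equiv\zeta\pmod P$ for a primitive $\ell$-th root of unity $\zeta\in\mathbb F_p$. The unit conditions of Step 3 are automatically consistent modulo $P$. The real content must come from the units $\sigma^j(u)$ together with the relations among the conjugates $\sigma^i\theta$: all pairwise differences $\sigma^i\theta-\sigma^j\theta$ are associates of $\eta$, so every normalized difference is a unit. I would use these relations to show that a whole family of expressions in $\zeta$ of the shape $(\zeta^a-\zeta^b)/(\zeta^c-\zeta^d)$, together with shifted sums, must be nonzero and pairwise distinct in $\mathbb F_p$. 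A counting argument should then force $p\le 2\ell+1$. Since $p\equiv1\pmod\ell$ is odd, this gives $p=2\ell+1$.

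\textbf{Step 5: conclude.} The same incompatibility should exclude two distinct primes in $\mathfrak f$. The units would then have to satisfy the residue constraints at two primes simultaneously, and an argument in the style of Theorem~\ref{Thm:Gras1} should give the contradiction. So $\mathfrak f=2\ell+1$ is prime, and $K$ is the unique degree-$\ell$ subfield of $\Q(\zeta_{2\ell+1})$, namely its maximal real subfield. When $2\ell+1$ is prime, monogenity of this field follows from Gauss periods: $\theta=\zeta+\zeta^{-1}$, with $\zeta$ a primitive $(2\ell+1)$-th root of unity, generates $\Z_K$.

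I expect the main obstacle to be Step~4, turning the exceptional-unit relations into the sharp bound $p\le 2\ell+1$. Steps 1--3 are routine discriminant bookkeeping. There I would first try reducing the full set $\{\sigma^i\theta\}$ modulo $P$ after dividing by $\eta$, and look for a pigeonhole argument on the ratios of the elements $1-u^k$.
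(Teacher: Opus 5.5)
\textbf{Main gap: Step~4.} The paper gives no proof of this statement; it is quoted from Gras. Your reading of it as a statement about cyclic fields with monogenic ring of integers is the intended one. So your argument has to stand on its own, and it breaks at Step~4.

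Everything you propose to use there is local at $P$. The congruences $u\equiv\zeta$ and $(\theta-\sigma^k\theta)/\eta\equiv(1-\zeta^k)/(1-\zeta)\pmod P$ hold, with nonzero right-hand sides, for every $p\equiv1\pmod\ell$. Since every totally tamely ramified extension of $\Q_p$ has a power integral basis, no relation visible only in the completion at $P$ can exclude any such $p$.

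Conditions saying that certain expressions in $\zeta$ are nonzero or pairwise distinct in $\mathbb F_p$ can only exclude finitely many primes, or force $p$ to be large. To leave only $p=2\ell+1$ you need an \emph{equation} satisfied by $\zeta$ in $\mathbb F_p$, and nothing in your plan produces one. Your plan also never uses $\ell\ge5$, yet the conclusion is false for $\ell=3$: $x^3-x^2-4x-1$ has discriminant $13^2$, so it is monogenic and generates the cyclic cubic field of conductor $13\ne7$.

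The missing input is global. Since $\sigma$ acts trivially on $\Z_K/P\cong\mathbb F_p$, every unit satisfies $\varepsilon^\ell\equiv N_{K/\Q}(\varepsilon)=\pm1\pmod P$. So the unit group reduces into $\mu_{2\ell}(\mathbb F_p)$, which is a proper subgroup of $\mathbb F_p^\times$ unless $p=2\ell+1$.

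Apply this to the units $\varepsilon_j=(\theta-\sigma^{2j}\theta)/(\theta-\sigma^j\theta)\equiv1+\zeta^j\pmod P$, for $1\le j\le\ell-1$. This gives $\Phi_\ell(x)\mid(1+x)^{2\ell}-1$ in $\mathbb F_p[x]$, that is, $B_0-1\equiv B_1\equiv\cdots\equiv B_{\ell-1}\pmod p$. Here $B_0=2+\binom{2\ell}{\ell}$ and $B_r=\binom{2\ell}{r}+\binom{2\ell}{\ell+r}$ for $1\le r\le\ell-1$ are the coefficients of $(1+x)^{2\ell}$ reduced modulo $x^\ell-1$.

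The two congruences $B_0-1\equiv B_1\equiv B_2$ already give $p\mid(2\ell+1)(\ell-3)$, hence $p=2\ell+1$ because $\ell\ge5$. This also makes Step~5 automatic: every tamely ramified prime equals $2\ell+1$, so there is at most one.

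\textbf{Further repairs.} In Step~2 the valuation count does \emph{not} exclude $\ell\mid\mathfrak f$. If $\ell^2\,\|\,\mathfrak f$, then at the wild prime $\mathfrak L$ one has $G=G_1$ and $G_2=1$. Hence $v_{\mathfrak L}(\theta-\sigma^k\theta)=2$ for every $k$, which matches $v_\ell(\Delta(K))=2(\ell-1)$ exactly. For $\ell=3$, $\Q(\zeta_9+\zeta_9^{-1})$ is indeed monogenic of conductor $9$.

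The differences are still associates, but excluding wild ramification needs the same unit argument at $\mathfrak L$. There every unit is $\equiv\pm1\pmod{\mathfrak L}$, since $x^\ell=x$ on $\mathbb F_\ell$. On the other hand, for a uniformizer $\pi$ at $\mathfrak L$ one has $\sigma^k\pi-\pi\equiv k(\sigma\pi-\pi)\pmod{\mathfrak L^3}$. This gives $(\theta-\sigma^2\theta)/(\theta-\sigma\theta)\equiv2\pmod{\mathfrak L}$, which is impossible for $\ell\ge5$.

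Finally, the identity in Step~3 is false as written: $\sigma^j(\eta)/\eta=u\,\sigma(u)\cdots\sigma^{j-1}(u)$, not $u^j$. Your formula holds only modulo $P$, which is all you actually use.
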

\begin{thm}{\rm \cite{Gras3}}\label{Thm:Gras3}
 Let $K/\Q$ be a cyclic extension of degree $\ell^r$, where $\ell\ge 5$ is prime and $r\ge 2$. Let $\Z_K$ denote the ring of integers of $K$.  
 \begin{enumerate}
   \item If $2\ell^r+1$ is not prime, then $\Z_K$ is not monogenic. 
   \item If $2\ell^r+1$ is a prime $p$, and if there exists a prime $q\ne p$ such that $q$ is totally ramified in $K$, then $\Z_K$ is not monogenic.
 \end{enumerate}
\end{thm}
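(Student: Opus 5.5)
Since this theorem is quoted from \cite{Gras3} and the paper does not reprove it, what follows is only a reconstruction of how I would approach it, and several steps are uncertain. The plan works with a monogenic generator and the Galois action. Suppose $\Z_K=\Z[\theta]$, let $\sigma$ generate $\Gal(K/\Q)\cong \Z/\ell^r\Z$, and set $N=\ell^r$. Then
$$\Delta(K)=\pm\prod_{i\ne j}\bigl(\sigma^i\theta-\sigma^j\theta\bigr).$$
Applying $\sigma^{-j}$ to each factor shows that the different of $K$ is generated by
$$\delta=\prod_{k=1}^{N-1}\bigl(\sigma^k\theta-\theta\bigr).$$
The first step is to compare the ideals $\mathfrak a_k=(\sigma^k\theta-\theta)$ with each other. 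For $k$ prime to $\ell$, $\sigma^k$ is again a generator. Both $\sigma\theta-\theta$ and $\sigma^k\theta-\theta$ divide the same ramification data: a prime $\mathfrak P$ divides $\sigma^k\theta-\theta$ exactly when $\sigma^k$ lies in the inertia group of $\mathfrak P$. Since we are in a cyclic $\ell$-group, the inertia groups form a chain of subgroups, and this condition depends only on $v_\ell(k)$. From this I would try to deduce that $\mathfrak a_k$ depends only on $v_\ell(k)$. Then the quotients
$$\varepsilon_k=\frac{\sigma^k\theta-\theta}{\sigma\theta-\theta},\qquad \gcd(k,\ell)=1,$$
would be units of $K$. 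This is the place where the argument could already fail, since the chain argument controls prime supports but not exponents; I would check the exponents using the conductor-discriminant formula.

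The second step is to exploit relations among these units. They satisfy
$$\varepsilon_{k+m}=\varepsilon_k+\sigma^k(\varepsilon_m),$$
so I expect to get unit equations of the form $u+v=1$ in $K$ or in a subfield. The plan is to reduce modulo a suitable ramified prime to force such a unit to be congruent to $1$ modulo a high power of that prime.

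For part (2), suppose a prime $q\ne p$ is totally ramified. Because $\ell\ge 5$ and $r\ge 2$, either $q=\ell$ (wild ramification) or $q\equiv 1 \tpmod{\ell^r}$ (tame). The unique prime $\mathfrak Q$ above $q$ then divides every $\sigma^k\theta-\theta$. I would compute $v_{\mathfrak Q}$ of each difference and compare it with the $q$-part of $\Delta(K)$ given by the conductor-discriminant formula. When a second prime is also ramified, this comparison should give an inequality of exponents; the point to check is that it cannot hold.

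For part (1), I would use the conductor and Theorem~\ref{Thm:Gras2}. Consider the subfield $k_1$ of degree $\ell$ and its image under the trace. Monogenicity does not descend to $k_1$, but the unit relations should force $K$ to be contained in a cyclotomic field $\Q(\zeta_p)$ with $p$ prime and $p-1=2\ell^r$, exactly as in the degree-$\ell$ case. The main obstacle is controlling the unit equations uniformly in $r$. There I would try first to adapt Gras's degree-$\ell$ argument to the subgroup chain, by induction on $r$.
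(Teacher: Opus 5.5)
The paper quotes this theorem from Gras without proof, so there is nothing here to compare against. Judged on its own, your sketch has a sound first step, a wrong identity in the second, and no mechanism that actually produces a contradiction.

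Step 1 is fine, and your worry about exponents is unfounded. The different is $(f'(\theta))=(\delta)$; this is the standard fact for $\Z_K=\Z[\theta]$, whereas applying $\sigma^{-j}$ only gives $N_{K/\Q}(\delta)=\pm\Delta(K)$. For every prime $\mathfrak P$ and every $\tau\neq 1$ one has $v_{\mathfrak P}(\tau\theta-\theta)\ge i_{\mathfrak P}(\tau)$, where $i_{\mathfrak P}$ is Hilbert's ramification function, taken to be $0$ off the inertia group. Hilbert's formula for the different then forces equality for every $\tau$. The ramification groups are subgroups and $G$ is abelian, so $(\sigma^k\theta-\theta)$ depends only on $\langle\sigma^k\rangle$ and is $G$-stable. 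Hence, with $\eta=\sigma\theta-\theta$, both $\varepsilon_k$ (for $\ell\nmid k$) and $u=\sigma(\eta)/\eta$ are units.

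However, $\varepsilon_{k+m}=\varepsilon_k+\sigma^k(\varepsilon_m)$ is false: for $k=m=1$ it gives $\varepsilon_2=2$, which is not a unit. The correct identity is $\varepsilon_{k+m}=\varepsilon_k+\sigma^k(\varepsilon_m)\,\sigma^k(\eta)/\eta$, equivalently $\varepsilon_k=\sum_{j=0}^{k-1}\sigma^j(\eta)/\eta$. In particular $\varepsilon_2=1+u$.

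The genuine gap is everything after that. Comparing $v_{\mathfrak Q}(\sigma^k\theta-\theta)$ with the discriminant can never give a contradiction. Under monogenicity these exponents are forced to match, which is exactly why the $\varepsilon_k$ are units. Your phrase ``reduce modulo a ramified prime'' is also never made concrete.

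The missing input is the residues of these units at a totally ramified prime $\mathfrak Q\mid q$, combined with $N_{K/\Q}(\varepsilon_k)=\pm1$. Since $\mathfrak Q$ is $G$-stable with residue field $\mathbb{F}_q$, we get $N_{K/\Q}(\varepsilon_k)\equiv(\varepsilon_k \bmod \mathfrak Q)^{\ell^r}\pmod q$.
\begin{enumerate}
\item If $q=\ell$, then $G=G_1$, so $\sigma^j(\eta)/\eta\equiv 1\pmod{\mathfrak Q}$ and $\varepsilon_2\equiv 2$. This gives $\pm1\equiv 2^{\ell^r}\equiv 2\pmod{\ell}$, which is impossible for $\ell\ge 5$.
\item If $q\neq\ell$, then $q\equiv 1\pmod{2\ell^r}$ and $\sigma(\eta)/\eta\equiv\omega$ with $\omega$ of order $\ell^r$ in $\mathbb{F}_q^{*}$. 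This gives $\bigl((\omega^k-1)/(\omega-1)\bigr)^{\ell^r}=\pm1$ for all $\ell\nmid k$. That condition holds automatically when $q=2\ell^r+1$ and is a real restriction otherwise. Ruling out $q>2\ell^r+1$ is the substantive step your sketch never reaches.
\end{enumerate}

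Finally, your route to part (1) has no mechanism. Descending to $k_1$, inducting on $r$, and forcing $K\subseteq\Q(\zeta_p)$ also aims at more than the theorem asserts. It is unnecessary as well. The inertia subgroups of the cyclic $\ell$-group $G$ form a chain and generate $G$, because $\Q$ has no unramified extensions, so some prime is totally ramified. Both (1) and (2) therefore follow from the single statement that a totally ramified prime must equal $2\ell^r+1$.
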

We see that Theorems \ref{Thm:Gras1}, \ref{Thm:Gras2} and \ref{Thm:Gras3} are concerned with abelian monogenic number fields whose degree over $\Q$ is relatively prime to 6. Our focus in this note is on the specific trinomials 
\[\FF_{n,a,b}(x):=x^{2n}+ax^n+b\in \Z[x], \quad \mbox{where} \ n\ge 1 \ \mbox{and} \ ab\ne 0.\] If $\FF_{n,a,b}(x)$ is irreducible over $\Q$, then we make the following two observations: the degree of the number field $\Q(\alpha)$, where $\FF_{n,a,b}(\alpha)=0$, is not relatively prime to 6, and if $d$ is a divisor of $n$ with $d\ge 1$, then $\FF_{d,a,b}(x)$ is also irreducible over $\Q$.  It is our goal to determine all abelian monogenic trinomials of the form $\FF_{n,a,b}(x)$. 

We make some notational remarks. For $\FF_{n,a,b}(x)$, we let 
\[\W:=(a^2-4b)/\gcd(2,a)^2.\]  
 For any integer $N\ge 1$, we let $C_N$ denote the cyclic group of order $N$, and $\Phi_{N}(x)$ denote the cyclotomic polynomial of index $N$ and degree $\phi(N)$.  
For integers $r$ and $m$ with $m\ge 2$, we let the notation ``$r \mmod{m}$" denote the unique integer $z\in \{0,1,2,\ldots,m-1\}$ such that $r\equiv z \pmod{m}$. That is, $r \mmod{m}=z$. For any positive integer $n\ge 2$, we define the {\em radical of $n$}, denoted rad($n$), to be the product of the distinct prime divisors of $n$.

Our main result is the following:
\begin{thm}\label{Thm:Main}
The trinomial $\FF_{n,a,b}(x)$ is abelian monogenic if and only if 
$(n,a,b)$ is contained in one of the following pairwise-disjoint sets: 
\begin{enumerate}
  \item \label{I1} $\{(1,a,b): \W \ \mbox{is squarefree, and}$\\
  \hspace*{.85in} $(a \mmod{4}, \ b\mmod{4})\in \{(0,1),(0,2),(2,2),(2,3)\} \ \mbox{if $2\mid a$}\}$.\\
    In this case, we have $\Gal(\FF_{1,a,b})\simeq C_2$.
   \item \label{I2} $\{(2,\pm 4,2), (2,-5,5)\}$.\\ 
   In this case, we have $\Gal(\FF_{2,a,b})\simeq C_4$.
   \item \label{I3} $\{(2,a,1): \W \ \mbox{is squarefree, and} \ a\mmod{4}\in \{0,3\}\}$.\\
   In this case, we have $\Gal(\FF_{2,a,1})\simeq C_2\times C_2$.
   \item \label{I4} $\{(2^r,-1,1): \ r\ge 2\}$.\\
    In this case, we have $\FF_{2^r,-1,1}(x)=\Phi_{2^{r+1}\cdot 3}(x)$ so that \[\Gal(\FF_{2^r,-1,1})\simeq C_2\times C_2\times C_{2^{r-1}}.\]
  \item \label{I5} $\{(3^s,1,1): \ s\ge 1\}$.\\
    In this case, we have $\FF_{3^s,1,1}(x)=\Phi_{3^{s+1}}(x)$ so that $\Gal(\FF_{3^s,1,1})\simeq C_{2\cdot 3^s}$. 
   \item \label{I6} $\{(3^s,-1,1): \ s\ge 1\}$.\\
    In this case, we have $\FF_{3^s,-1,1}(x)=\Phi_{2\cdot 3^{s+1}}(x)$ so that $\Gal(\FF_{3^s,-1,1})\simeq C_{2\cdot 3^s}$.
  \item \label{I7} $\{(2^r3^s,-1,1): \ rs\ge 1\}$.\\
     In this case, we have $\FF_{2^r3^s,-1,1}(x)=\Phi_{2^{r+1}3^{s+1}}(x)$ so that 
     \[\Gal(\FF_{2^r3^s,-1,1})\simeq C_2\times C_2\times C_{2^{r-1}}\times C_{3^s}.\]
       \end{enumerate}
\end{thm}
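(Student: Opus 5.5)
We reduce to small $n$ by combining monogenicity criteria for power-compositional trinomials with the rigid structure of abelian fields. The key tool is a Dedekind-type index criterion for $\FF_{n,a,b}(x)=g(x^n)$ with $g(x)=x^2+ax+b$. For each prime $p$ it gives explicit conditions on $a,b,n$ under which $p\nmid$ index$(\FF_{n,a,b})$. The discriminant is
\[\Delta(\FF_{n,a,b})=(-1)^n n^{2n} b^{n-1}(a^2-4b)^n,\]
so the only primes to check are those dividing $nb(a^2-4b)$. A first consequence, which we will use repeatedly, is that monogenicity of $\FF_{n,a,b}$ forces monogenicity of $\FF_{d,a,b}$ for every $d\mid n$.

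\textbf{Case $n=1$.} Here $\Gal\simeq C_2$ is automatic, so we only need the classical condition for $x^2+ax+b$ to be monogenic. This says $a^2-4b$ equals the field discriminant, i.e.\ $\W$ is squarefree, together with the stated congruences mod $4$ when $a$ is even. This gives set \eqref{I1}.

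\textbf{Case $n=2$.} The quartic $x^4+ax^2+b$ has Galois group $C_4$, $C_2\times C_2$, or $D_4$. The group is $C_2\times C_2$ iff $b$ is a square, and $C_4$ iff $b(a^2-4b)$ is a square (with $b$ not a square). When $b=c^2$, monogenicity at primes dividing $c$ forces $c=\pm1$. The case $b=1$ then gives \eqref{I3} via the $p=2$ and squarefree conditions on $\W$. The case $b=-1$ should be excluded, since $x^4+ax^2-1$ does not have square $b$ unless handled separately, and irreducibility also has to be checked. When $b(a^2-4b)$ is a square, the index conditions force $b$ squarefree and $p^2\nmid a^2-4b$ away from small primes. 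Combining these with the square condition pins $b\mid a^2-4b$ with small cofactor, and a short Diophantine analysis should leave $(\pm4,2)$ and $(-5,5)$.

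\textbf{Case $n\ge3$.} For every $d\mid n$, $\FF_{d,a,b}$ is irreducible and has abelian Galois group, since $\Q(\theta^{n/d})$ is a subfield of an abelian field and hence Galois and abelian. First we show that $\Gal$ abelian with $n\ge3$ forces $b=\pm1$. The roots are $\theta\zeta_n^k$, so $K$ contains $\zeta_n$ up to roots of unity, and normality of $\Q(\theta)$ forces $\theta'/\theta$ to be a unit. With $b=\pm1$, $\theta$ becomes a unit whose conjugates all have the same absolute value up to the two roots of $g$. An abelian field is either totally real or CM, and in either case this should force $\theta$ to be a root of unity. Hence $\FF_{n,a,b}$ is cyclotomic, and inspecting which $\Phi_m$ have the shape $x^{2n}+ax^n+b$ gives exactly the families \eqref{I4}--\eqref{I7}. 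Conversely, cyclotomic polynomials are monogenic and abelian.

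\textbf{Main obstacle.} I expect the hardest step to be excluding non-root-of-unity $\theta$ for $n\ge3$, and in particular the case where $n$ has a prime factor $p\ge5$. There I would first try to use the divisor reduction to $d=p$, together with Theorem~\ref{Thm:Gras1}--\ref{Thm:Gras3}-style restrictions and direct index computations at $p$, to rule out $\FF_{p,a,\pm1}$ unless it is $\Phi$-type. The disjointness of the seven sets is then immediate from comparing $n$ and $(a,b)$.
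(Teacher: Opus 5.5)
The gap is in your case $n\ge 3$. There you try to show from the Galois hypothesis alone that $\FF_{n,a,b}$ is cyclotomic, and both intermediate claims are false.

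First, an abelian Galois group does not force $b=\pm1$. The trinomial $x^6+8x^3+64=64\,\Phi_9(x/2)$ is irreducible with Galois group $C_6$ and $b=64$. All ratios of its roots are roots of unity, so even granting your unjustified claim that $\theta'/\theta$ is a unit, nothing about $b$ follows.

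Second, $b=\pm1$ together with an abelian Galois group does not force $\theta$ to be a root of unity. Put $\phi=(1+\sqrt5)/2$. Since $\phi^4+\phi^{-4}=7$, the number $\theta=\zeta_8\phi$ is a root of $x^8+7x^4+1$. From $\theta^4=-\phi^4$ we get $\sqrt5\in\Q(\theta)$, hence $\zeta_8=\theta/\phi\in\Q(\theta)$. So $\Q(\theta)=\Q(\zeta_8,\sqrt5)$ has degree $8$ and contains all the roots $i^k\theta^{\pm1}$. Thus $x^8+7x^4+1$ is irreducible with Galois group $C_2\times C_2\times C_2$, yet $|\theta|=\phi\ne 1$. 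The ``totally real or CM'' heuristic gives nothing here. Kronecker's theorem applies only when $a^2-4b<0$, so that every root has absolute value $1$. For $n=2^r$, nothing in your argument rules out a real quadratic field $\Q(\theta^n)$.

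Both examples are abelian, and each is excluded only by monogenicity: $b=64$ is not squarefree, and in the second $\FF_{2,7,1}$ has $\W=45$. So monogenicity must be used in exactly this step, and your proposal never uses it there. One way to repair the argument:
\begin{enumerate}
\item Dedekind's criterion at primes $p$ with $p^2\mid b$ shows that $b$ is squarefree for every $n\ge 2$.
\item If $3\mid n$, then $\Q(\theta^{n/3})$ is cyclic of degree $6$, so $\Q(\theta^n)=\Q(\sqrt{-3})$ and $\Q(\theta^{n/3})=\Q(\zeta_3)(\sqrt[3]{\beta})$ with $\beta=\theta^n$. Kummer theory shows this field is abelian over $\Q$ only if $b=\beta\bar\beta$ is a rational cube. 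Hence $b=1$, and now Kronecker does apply.
\item If $n=2^r$ with $r\ge 2$, you must, as the paper does, pass to the monogenic $\FF_{2,a,b}$. Your observation that monogenicity descends to divisors is correct and suffices, as does Lemma \ref{Lem:KKR}. Then take the explicit list from the case $n=2$ and check which members give an abelian $\FF_{4,a,b}$. The $C_4$ quartics do not, and among $x^8+ax^4+1$ with $\W$ squarefree only $a=-1$ survives.
\end{enumerate}

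Finally, your ``main obstacle'' is not one. As you note, $\FF_{p,a,b}$ is irreducible with abelian Galois group for each prime $p\mid n$. So $\Q(\theta^{n/p})$ is normal of degree $2p$ and contains $\zeta_p$, giving $p-1\mid 2p$ and hence $p\in\{2,3\}$. Gras's theorems concern degrees prime to $6$, so they are neither needed nor applicable to these even-degree fields.
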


\section{Preliminaries}\label{Section:Prelim}

The following lemma shows that there are severe restrictions on the value of $n$ if $\Gal(\FF_{n,a,b})$ is abelian. 
\begin{lemma}\label{Lem:Restrictions on n}
Suppose that $\FF_{n,a,b}(x)$ is irreducible over $\Q$, and that $\Gal(\FF_{n,a,b})$ is abelian. Then $n=2^r3^s$ for some nonnegative integers $r$ and $s$. Furthermore, if $d\ge 1$ is a divisor of $n$, then $\Gal(\FF_{d,a,b})$ is abelian. 
\end{lemma}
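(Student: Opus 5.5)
The plan is to prove the second assertion first and then deduce the restriction on $n$ from it, applied to prime divisors $p$ of $n$. Irreducibility of $\FF_{d,a,b}(x)$ for $d\mid n$ is already observed in Section~\ref{Section:Intro}: if $\FF_{d,a,b}(x)=g(x)h(x)$ nontrivially, then $\FF_{n,a,b}(x)=g(x^{n/d})h(x^{n/d})$.

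\emph{Step 1 (subfields).} Let $d\ge 1$ divide $n$, and let $M$ be the splitting field of $\FF_{n,a,b}(x)$.
\begin{itemize}
\item If $\theta$ is a root of $\FF_{n,a,b}(x)$, then so is $\theta\zeta_n$. Hence $\zeta_n=(\theta\zeta_n)/\theta\in M$, and so $\zeta_d\in M$.
\item Let $\beta$ be any root of $\FF_{d,a,b}(x)$. Then $\gamma:=\beta^d$ is a root of $y^2+ay+b$.
\item The roots of $\FF_{n,a,b}(x)$ include all $n$-th roots of $\gamma$. Choose one, $\alpha\in M$. Then $(\alpha^{n/d})^d=\gamma=\beta^d$, so $\beta=\alpha^{n/d}\zeta_d^k$ for some $k$, which lies in $M$.
\end{itemize}
Thus the splitting field of $\FF_{d,a,b}(x)$ is a subfield of $M$. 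Since $\Gal(\FF_{n,a,b})$ is abelian, every subfield of $M$ is Galois over $\Q$, and its Galois group is a quotient of $\Gal(\FF_{n,a,b})$. Hence $\Gal(\FF_{d,a,b})$ is abelian.

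\emph{Step 2 (restriction on $n$).} Let $p$ be a prime dividing $n$. By Step 1, $g(x):=\FF_{p,a,b}(x)$ is irreducible of degree $2p$ with abelian Galois group $G$.
\begin{itemize}
\item A transitive abelian permutation group acts regularly. So $|G|=2p$, and $\Q(\beta)$ is the full splitting field of $g(x)$ for any root $\beta$.
\item As in Step 1, $\beta\zeta_p$ is also a root of $g(x)$, so $\zeta_p=(\beta\zeta_p)/\beta\in\Q(\beta)$.
\item Therefore $\Q(\zeta_p)\subseteq \Q(\beta)$, which gives $p-1=[\Q(\zeta_p):\Q]$ dividing $[\Q(\beta):\Q]=2p$.
\item Since $\gcd(p-1,p)=1$, we get $p-1\mid 2$, hence $p\in\{2,3\}$.
\end{itemize}
Consequently every prime divisor of $n$ is $2$ or $3$, i.e.\ $n=2^r3^s$ with $r,s\ge 0$.

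The only step needing some care is the containment $\beta\in M$ in Step 1. It relies on $\zeta_d$ lying in $M$, which comes from ratios of roots of $\FF_{n,a,b}(x)$. The remaining steps are standard facts about abelian Galois groups and cyclotomic degrees.
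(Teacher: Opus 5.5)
Your proof is correct and follows the paper's argument: the splitting field of $\FF_{d,a,b}(x)$ sits inside that of $\FF_{n,a,b}(x)$, so its Galois group is an abelian quotient, and $\zeta_p\in\Q(\beta)$ forces $p-1\mid 2p$. Your ordering (the subfield claim first) and your explicit check that the splitting field of $\FF_{d,a,b}(x)$ lies in $M$ tidy up the paper's proof, which uses the normality of $\Q(\alpha)$ for a root of $\FF_{p,a,b}(x)$ before showing that $\Gal(\FF_{p,a,b})$ is abelian, and which asserts $L\subseteq K$ without justification.
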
 
\begin{proof}
Note that if $n=1$, then $\Gal(\FF_{1,a,b})\simeq C_2$ is abelian. So, suppose that $n\ge 2$, and let $p$ be a prime divisor of $n$.
Then, $\FF_{p,a,b}(x)$ is irreducible over $\Q$ since $\FF_{n,a,b}(x)=\FF_{p,a,b}(x^{n/p})$ is irreducible over $\Q$. Observe that if $\alpha$ is a zero of $\FF_{p,a,b}(x)$, then $\alpha\zeta_p$ is also a zero of $\FF_{p,a,b}(x)$, where $\zeta_p$ is a primitive $p$th root of unity. Thus, since $L=\Q(\alpha)$ is normal over $\Q$, $L$ contains the cyclotomic field $\Q(\zeta_p)$. Hence, $[\Q(\zeta_p):\Q]=p-1$ divides $[L:\Q]=2p$, which implies that $p\in \{2,3\}$. 

Suppose next that $d\ge 2$ is a divisor of $n$, and let $K$ and $L$ be the respective splitting fields of $\FF_{n,a,b}(x)$ and the irreducible trinomial $\FF_{d,a,b}(x)$. Then $L\subseteq K$ and $L$ is normal over $\Q$. It follows that $\Gal(\FF_{d,a,b})$ is abelian since otherwise $\Gal(\FF_{n,a,b})$ would have a nonabelian quotient, which is impossible. 
\end{proof} 

The following four theorems describe the abelian monogenic trinomials $\FF_{n,a,b}(x)$ for, respectively, $n\in \{1,2,3,6\}$. The proofs can be found in \cite{Jones degree 12}.
\begin{thm}\label{Thm:Main d=1} Suppose that $\FF_{1,a,b}(x)$ is irreducible over $\Q$. Then $\FF_{1,a,b}(x)$ is abelian monogenic if and only if $\W$ is squarefree and 
 \[(a \mmod{4}, \ b\mmod{4})\in \{(0,1),(0,2),(2,2),(2,3)\} \quad \mbox{if $2\mid a$.}\]
\end{thm}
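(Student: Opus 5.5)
Since $\FF_{1,a,b}(x)=x^2+ax+b$ is an irreducible quadratic, its Galois group is $C_2$, which is abelian. So the statement reduces to deciding exactly when $x^2+ax+b$ is monogenic, that is, when $\Delta(\FF_{1,a,b})=a^2-4b$ equals the field discriminant of $K=\Q(\theta)$. I plan to use the classical description of quadratic fields. Write $a^2-4b=f^2D$ with $D\ne 1$ squarefree. Then $K=\Q(\sqrt D)$, and $\Delta(K)=D$ if $D\equiv 1\pmod 4$, while $\Delta(K)=4D$ if $D\equiv 2,3\pmod 4$. By \eqref{Eq:Dis-Dis}, monogenicity is equivalent to $f^2=1$ in the first case and $f^2=4$ in the second. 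Since $\Z[\theta]$ depends only on $\theta$ up to translation by integers, I can also complete the square directly.

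\textbf{Case $a$ odd.} Here $\W=a^2-4b\equiv 1\pmod 4$, so every square factor of $\W$ is odd. If $\W$ is squarefree, then $D=\W\equiv1\pmod 4$ and $\Delta(K)=\W=\Delta(\FF_{1,a,b})$, so $\FF_{1,a,b}$ is monogenic. Conversely, if $p^2\mid \W$ for some prime $p$, then $p$ is odd and $D=\W/f^2\equiv1\pmod 4$. This gives index $f>1$, so $\FF_{1,a,b}$ is not monogenic.

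\textbf{Case $a$ even.} Write $a=2c$. Then $\theta+c=\sqrt{c^2-b}$ and $\W=c^2-b$, so $\Z[\theta]=\Z[\sqrt{\W}]$. This ring is the maximal order exactly when $\W$ is squarefree and $\W\not\equiv1\pmod 4$. It remains to translate the condition $\W\equiv 2,3\pmod 4$ into residues of $(a,b)$ modulo $4$. Since $c^2\equiv 0$ or $1\pmod 4$ according as $c$ is even or odd, i.e. $a\equiv 0$ or $2\pmod 4$:
\begin{itemize}
\item If $a\equiv 0\pmod 4$, then $\W\equiv -b\pmod 4$, and $\W\equiv 2,3$ means $b\equiv 2,1\pmod 4$.
\item If $a\equiv 2\pmod 4$, then $\W\equiv 1-b\pmod 4$, and $\W\equiv 2,3$ means $b\equiv 3,2\pmod 4$.
\end{itemize}
This yields exactly the pairs $(0,1),(0,2),(2,2),(2,3)$. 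Squarefreeness of $\W$ already excludes $\W\equiv0\pmod4$, consistent with the list.

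There is no real obstacle here. The only care needed is the bookkeeping in the even case, namely checking that the stated residue pairs correspond exactly to $\W\equiv 2,3\pmod 4$. Irreducibility, which is assumed, ensures $\W$ is not a square, so $D\ne1$.
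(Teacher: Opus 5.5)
Your proof is correct and complete. The paper does not prove this statement itself: it quotes it as a preliminary and defers the proof to an external reference, where the case $n=1$ is handled together with $n\in\{2,3,6\}$, presumably through a general monogenicity test for trinomials such as Dedekind's criterion or explicit index conditions.

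Your route is self-contained. Because an irreducible quadratic automatically has Galois group $C_2$, you reduce to comparing $\Delta(\FF_{1,a,b})=a^2-4b$ with the classical discriminant of $\Q(\sqrt{D})$ via \eqref{Eq:Dis-Dis}. In the even case you complete the square to get $\Z[\theta]=\Z[\sqrt{\W}]$. The residue bookkeeping modulo $4$ is then exactly right: the four excluded even-case pairs $(0,0),(0,3),(2,0),(2,1)$ give $\W\equiv 0$ or $1\pmod 4$.

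Your approach buys transparency. It shows why $\W$, and not $a^2-4b$, is the quantity whose squarefreeness matters, and it uses nothing beyond the structure of quadratic orders. A general criterion buys uniformity, since the same machinery covers the higher-degree cases the paper needs.

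One cosmetic point: you use $f$ both for the square part of $a^2-4b$ and, through \eqref{Eq:Dis-Dis}, for the polynomial. Rename one of them.
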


\begin{thm}\label{Thm:Main d=2} Suppose that $\FF_{2,a,b}(x)$ is irreducible over $\Q$. Then $\FF_{2,a,b}(x)$ is abelian monogenic with $\Gal(\FF_{2,a,b})\simeq$
\begin{enumerate}
   \item \label{g4:1} $C_4$ if and only if $\FF_{2,a,b}(x)\in \{x^4\pm 4x^2+2,\ x^4-5x^2+5\}$; 
   \item \label{g4:2} $C_2\times C_2$  if and only if $\W$ is squarefree and  
   \[\FF_{2,a,b}(x)\in \{x^4+ax^2+1: a\mmod{4}\in \{0,3\}\}.\] 
   \end{enumerate}
\end{thm}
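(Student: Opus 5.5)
The statement is Theorem~\ref{Thm:Main d=2}, which concerns the quartic $f(x)=x^4+ax^2+b$. I would prove it by splitting according to the Galois group of this biquadratic and then using a discriminant/index criterion in each case.

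Let $\theta$ be a root, so $\theta^2=(-a\pm\sqrt{a^2-4b})/2$. The splitting field contains $\Q(\sqrt{a^2-4b})$ and $\Q(\sqrt{b})$. Since $f$ is irreducible, the standard criterion applies: $\Gal(f)\simeq C_2\times C_2$ when $b$ is a square, $\Gal(f)\simeq C_4$ when $b(a^2-4b)$ is a square, and $\Gal(f)\simeq D_4$ otherwise. So abelian means exactly that $b$ or $b(a^2-4b)$ is a square. The discriminant is
\[\Delta(f)=16\,b\,(a^2-4b)^2.\]
Monogenicity will be tested prime by prime with Dedekind's criterion. Alternatively, I would compare $\Delta(f)$ with the known field discriminants: $d_1^2d_2^2d_3^2/\ldots$ via the conductor–discriminant formula for biquadratic fields, and $\mathfrak f^3 d_k$-type formulas for cyclic quartic fields.

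\emph{Case $C_2\times C_2$.} Here $b=c^2$. Then $\Delta(f)=16c^2(a^2-4c^2)^2$. A field discriminant of a biquadratic field is a product of three quadratic discriminants, and monogenicity forces index$(f)=1$. If a prime $p\mid c$, then $f\equiv x^2(x^2+a)\pmod p$. Dedekind's criterion then fails unless $p^2\nmid b$, which is impossible since $b=c^2$. Hence $c=\pm1$.

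For $b=-1$, the subfield $\Q(\sqrt{b})=\Q(i)$ would require $b(a^2-4b)$ to be considered. Actually, with $b=-1$ a square would be needed, so we get $b=1$. Then $f=x^4+ax^2+1$, and its three quadratic subfields are $\Q(\sqrt{a^2-4})$, $\Q(\sqrt{a+2})$ and $\Q(\sqrt{-(a-2)})$. This uses $x^4+ax^2+1=(x^2+1)^2-(2-a)x^2$. Matching $\Delta(f)=16(a^2-4)^2$ against the product of the three quadratic discriminants gives the condition that $\W$ is squarefree, together with a $2$-adic condition. The $2$-adic condition is to be checked by Dedekind at $p=2$: $f\equiv(x^2+x+1)^2$ or $(x+1)^4\pmod 2$ according to the parity of $a$. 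This should produce exactly $a\bmod 4\in\{0,3\}$.

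\emph{Case $C_4$.} Here $b(a^2-4b)=e^2$. The cyclic quartic field is wildly constrained. Every prime $p$ dividing $b$ must satisfy $p\parallel b$ by Dedekind. Every odd prime dividing $a^2-4b$ must divide it exactly once, again by Dedekind applied to $f\equiv(x^2+a/2)^2$. Combined with $b(a^2-4b)$ being a square, this forces $\operatorname{rad}(b)=\operatorname{rad}(a^2-4b)$ away from $2$. This leaves few options: $b\in\{\pm2,\pm5,\pm10,\ldots\}$, with $a^2-4b=b\cdot\square$. Then $b>0$ must hold (a cyclic quartic containing $\Q(\sqrt{b})$ forces $b$ to be a sum of two squares). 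So $b=p$ or $2p$, with $a^2=b(4+t^2)$.

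The main obstacle is bounding this case completely. I would apply Dedekind at $p\mid b$, where $p^2\mid a^2$ forces $f\equiv x^4$ and the criterion requires $p^2\nmid b$. That is automatic, so the obstacle instead lies at the odd primes dividing $t$, which must not exist, and at the prime $2$. Showing $t\in\{1,2\}$ reduces the possibilities to $a^2=5b$ or $a^2=8b$, giving $(a,b)=(\pm5,5)$ and $(\pm4,2)$. Dedekind at $2$ and $5$ then eliminates $(5,5)$, since its roots generate a non-maximal order. This leaves precisely $x^4\pm4x^2+2$ and $x^4-5x^2+5$. Finally, I would verify the converse directly by computing the index for each listed polynomial.
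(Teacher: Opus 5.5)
The paper does not prove this statement; it only cites earlier work of Jones. Your direct strategy is a sensible alternative: use the $V_4/C_4/D_4$ criterion for $x^4+ax^2+b$, the formula $\Delta(f)=16b(a^2-4b)^2$, and Dedekind's criterion prime by prime. The $C_2\times C_2$ case works out, and the Dedekind check at $2$ does give exactly $a\equiv 0,3\pmod 4$. Fix one sign slip there: the quadratic subfields are $\Q(\sqrt{2-a})$, $\Q(\sqrt{-2-a})$ and $\Q(\sqrt{a^2-4})$, not $\Q(\sqrt{a+2})$. Also, the aside about $b=-1$ is moot, since $c=\pm1$ already gives $b=c^2=1$.

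The genuine gap is in the $C_4$ case. Your odd-prime analysis correctly gives $b$ squarefree, $a^2-4b=bt^2$ with $t\neq 0$, and $t$ free of odd primes, so $t=\pm 2^k$. From there you assert ``$b=p$ or $2p$'' and ``showing $t\in\{1,2\}$'' with no argument. The first claim is false at that stage:
\begin{itemize}
\item $k=2$ gives $x^4\pm 10x^2+5$;
\item $k=4$ gives $x^4\pm130x^2+65$, so $b=5\cdot 13$.
\end{itemize}
All of these are Eisenstein at $5$, have Galois group $C_4$, and pass every odd-prime Dedekind test. So an infinite family survives your argument, and the step you only call ``the main obstacle'' is exactly what is missing.

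It is a short computation at $2$. If $k\ge 2$, then $a=2a'$ with $a'^2=b(1+4^{k-1})$. Since $b$ is squarefree, $b$ and $a'$ are odd, and $b\equiv a'^2\equiv 1\pmod 4$. Then $f\equiv (x+1)^4\pmod 2$, and
\[\frac{f-(x+1)^4}{2}\equiv (a'-3)x^2+\frac{b-1}{2}\equiv 0 \pmod 2,\]
so $2$ divides the index. Only $k\in\{0,1\}$ remain, i.e.\ $a^2=5b$ or $a^2=8b$, giving $(a,b)\in\{(\pm5,5),(\pm4,2)\}$.

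Finally, $x^4+5x^2+5$ is the minimal polynomial of $\zeta_5-\zeta_5^{-1}$ and has index $4$. It is excluded by Dedekind at $2$ alone, since $5$ does not divide its index.
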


\begin{thm}\label{Thm:Main d=3}  
Suppose that $\FF_{3,a,b}(x)$ is irreducible over $\Q$. Then
$\FF_{3,a,b}(x)$ is abelian monogenic if and only if $\FF_{3,a,b}(x)\in \{x^6-x^3+1,\ x^6+x^3+1\}$, in which case,  $\Gal(\FF_{3,a,b})\simeq C_6$. 
\end{thm}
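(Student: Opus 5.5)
Assume $\FF_{3,a,b}(x)$ is irreducible with abelian Galois group. Let $\alpha$ be a root and $L=\Q(\alpha)$. Then $L$ is normal of degree $6$. Since $\alpha\zeta_3$ is also a root of $\FF_{3,a,b}(x)$, the argument of Lemma \ref{Lem:Restrictions on n} gives $\Q(\zeta_3)=\Q(\sqrt{-3})\subseteq L$.

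The key structural step is to use the quadratic subfield. The element $\beta=\alpha^3$ is a root of $\FF_{1,a,b}(x)$, so $\Q(\beta)=\Q(\sqrt{a^2-4b})$ is a quadratic subfield of $L$. An abelian group of order $6$ is $C_6$, which has a unique subgroup of index $2$. Hence $L$ has a unique quadratic subfield, and $\Q(\sqrt{a^2-4b})=\Q(\sqrt{-3})$, i.e. $a^2-4b=-3m^2$ for some integer $m\ne 0$. In particular $\Gal(\FF_{3,a,b})\simeq C_6$.

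Next I impose monogenicity. Since $\Z[\beta]=\Z[\alpha^3]\subseteq\Z[\alpha]$ and $\Z[\alpha]=\Z_L$, a short argument should show that $\Z[\beta]$ is the full ring of integers of the quadratic subfield. The idea is that an element of $\Z_L$ fixed by the order-$3$ automorphism $\alpha\mapsto\zeta_3\alpha$, written in the basis $1,\alpha,\dots,\alpha^5$, involves only $1$ and $\alpha^3$. So $\FF_{1,a,b}$ is monogenic, and Theorem \ref{Thm:Main d=1} with $\Q(\beta)=\Q(\sqrt{-3})$ forces $a^2-4b=-3$. Then $a$ is odd and $b=(a^2+3)/4$.

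Now use the discriminant. I will compute $\Delta(\FF_{3,a,b})=-3^6 b^2(a^2-4b)^3$ up to sign via the standard trinomial formula, which here gives $\pm 3^9 b^2$. For a cyclic sextic field containing $\Q(\sqrt{-3})$, every prime $p\mid b$ with $p\ne 3$ must then divide the field discriminant. The contradiction should come from a Dedekind criterion at a prime $p\mid b$, $p\neq 3$. Modulo $p$ we have $\FF_{3,a,b}\equiv x^3(x^3+a)$, and $p\nmid a$ because $a^2\equiv -3\pmod p$. The repeated factor is $x$, and the Dedekind test reduces to checking whether $p^2\mid b$. Since $p^2\mid\Delta$ with index $1$, $p$ must ramify, and then $\Q(\alpha)$ would need a tame ramification structure; I'd show this is incompatible with the $C_6$ structure, or more simply argue via the conductor.

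The cleanest route, which I would try first, is conductor-based. Since $\Delta(L)=\pm3^9b^2$ and $L$ is abelian of degree $6$, the conductor-discriminant formula constrains it. The six characters are $1$, $\chi_2$ (of conductor $3$), two cubic characters $\psi,\bar\psi$ of conductor $f$, and $\chi_2\psi,\chi_2\bar\psi$. This gives $\Delta=3\cdot f^2\cdot \mathrm{cond}(\chi_2\psi)^2$. If a prime $p\ne3$ divides $b$, it must be tamely ramified with $p\equiv1\pmod 3$ and contributes $p^4$, so $p^2\mid b$. But Dedekind at $p$ with $p^2\mid b$ shows $p\mid\mathrm{index}$, a contradiction. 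So $b$ is a power of $3$. Since $b=(a^2+3)/4$ and $9\nmid a^2+3$, we get $b\in\{1,3\}$. The case $b=3$ gives $a^2=9$, and Eisenstein-type or Dedekind checks at $3$ will rule out $x^6\pm3x^3+3$. I expect this to be the main obstacle: verifying non-monogenicity, or else reconciling $3^{9}\cdot 9$ with the $3$-part of the conductor formula, whose maximum exponent is bounded. That leaves $b=1$ and $a=\pm1$, giving $\Phi_9$ and $\Phi_{18}$, which are monogenic as cyclotomic polynomials. This establishes the converse and $\Gal\simeq C_6$.
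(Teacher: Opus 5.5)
Most of your argument is sound: the reduction to $\Q(\beta)=\Q(\sqrt{-3})$ and $a^2-4b=-3$, the value $\Delta(\FF_{3,a,b})=-3^9b^2$, and the exclusion of primes $p\neq 3$ dividing $b$. For those primes, conductor--discriminant gives $v_p(\Delta(L))\in\{0,4\}$, while Dedekind at the factor $x$ gives $p\mid\mathrm{index}$ exactly when $p^2\mid b$.

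One small repair is needed in the fixed-vector step. The coefficients $\zeta_3^i c_i$ are not rational, so you cannot compare them in the $\Q$-basis $1,\alpha,\dots,\alpha^5$. Instead, use that $\sigma$ fixes $\beta$, hence $\zeta_3\in\Q(\beta)$. Then write $\gamma=\sum_{i=0}^{2}(c_i+c_{i+3}\beta)\alpha^i$ and compare coefficients in the $\Q(\beta)$-basis $1,\alpha,\alpha^2$ of $L$.

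The step that fails as you first propose it is $b=3$. The trinomials $x^6\pm3x^3+3$ cannot be eliminated by an Eisenstein or Dedekind check at $3$. They \emph{are} $3$-Eisenstein, so $3\nmid\mathrm{index}$, and since their discriminant is $-3^{11}$ they are in fact monogenic. What fails for them is abelianness, so the abelian hypothesis must be used at $3$; only your second option can work.

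Concretely, a cubic character of $\Z_3^\times\simeq\{\pm1\}\times(1+3\Z_3)$ factors through $(1+3\Z_3)/(1+9\Z_3)$. So the $3$-parts of the conductors of $\psi$ and $\chi_2\psi$ are either $1$ and $3$, or $9$ and $9$, whence $v_3(\Delta(L))\in\{3,9\}$. Comparing with $v_3(\Delta(\FF_{3,a,b}))=9+2v_3(b)$ gives $3\nmid b$ at once, which also makes the $9\nmid a^2+3$ step unnecessary. Equivalently: once $b$ is a power of $3$, $L$ is unramified outside $3$. By Kronecker--Weber it is then the unique sextic subfield $\Q(\zeta_9)$ of some $\Q(\zeta_{3^k})$, whose discriminant is $-3^9\neq-3^{11}$.

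With that repair your proof is complete. The paper gives no argument for this theorem, deferring to the author's earlier work on these trinomials, so your conductor-based route is a self-contained alternative.

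A shorter route is available via Kummer theory over $K=\Q(\sqrt{-3})$. The field $L=K(\sqrt[3]{\beta})$ is abelian over $\Q$ if and only if $\bar\beta\equiv\beta^{-1}$ modulo $K^{\times3}$, i.e. if and only if $b=\beta\bar\beta$ is a cube. Meanwhile, Dedekind at the factor $x$ (valid at every $p\mid b$, including $p=3$) shows monogenicity forces $b$ squarefree. Hence $b=\pm1$, and in fact $b=1$ since $4b=a^2+3m^2>0$. Then $a^2+3m^2=4$ with $m\neq0$ gives $a=\pm1$. This treats $p=3$ uniformly with the other primes and explains exactly why $x^6\pm3x^3+3$ drops out: $3$ is not a cube.
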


\begin{thm}\label{Thm:Main d=6}  
Suppose that $\FF_{6,a,b}(x)$ is irreducible over $\Q$. Then
$\FF_{6,a,b}(x)$ is abelian monogenic if and only if $\FF_{6,a,b}(x)=x^{12}-x^6+1$, in which case, $\Gal(\FF_{6,a,b})\simeq C_2\times C_6$. 
\end{thm}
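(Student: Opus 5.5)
The plan is to reduce to the cases $n=3$ and $n=2$ via the divisor structure of $6$, using Lemma \ref{Lem:Restrictions on n} for the Galois condition and a descent argument for monogenicity.

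\emph{Step 1: monogenicity descends from $\FF_{n,a,b}$ to $\FF_{d,a,b}$ for $d\mid n$.} Let $\alpha$ be a zero of $\FF_{n,a,b}(x)$, put $m=n/d$ and $\theta=\alpha^{m}$. Then $\theta$ is a zero of $\FF_{d,a,b}(x)$, which is irreducible (as noted in the introduction), and $\Q(\theta)\subseteq \Q(\alpha)$.
\begin{itemize}
\item Since $\FF_{n,a,b}(x)=\FF_{d,a,b}(x^m)$ has degree $2n=m\cdot 2d$, we get $[\Q(\alpha):\Q(\theta)]=m$. The powers $\alpha^{mj+i}$ with $0\le i<m$ and $0\le j<2d$ are exactly $1,\alpha,\ldots,\alpha^{2n-1}$. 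Hence
\[\Z[\alpha]=\bigoplus_{i=0}^{m-1}\Z[\theta]\,\alpha^{i},\]
and this direct sum is free over $\Z[\theta]$.
\item Suppose $\Z[\theta]$ is not the full ring of integers of $\Q(\theta)$. Then for some prime $p$ there is $g(x)\in\Z[x]$ with $\deg g<2d$, not all coefficients divisible by $p$, such that $g(\theta)/p$ is an algebraic integer.
\item Then $g(\alpha^m)/p$ is an algebraic integer in $\Q(\alpha)$. By the direct sum decomposition it does not lie in $\Z[\alpha]$, so $\FF_{n,a,b}(x)$ is not monogenic.
\end{itemize}
Combined with Lemma \ref{Lem:Restrictions on n}, this shows: if $\FF_{6,a,b}(x)$ is abelian monogenic, then so are $\FF_{3,a,b}(x)$ and $\FF_{2,a,b}(x)$.

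\emph{Step 2: apply Theorem \ref{Thm:Main d=3}.} It forces $(a,b)\in\{(1,1),(-1,1)\}$.
\begin{itemize}
\item The case $(a,b)=(1,1)$ is excluded because
\[x^{12}+x^{6}+1=\frac{x^{18}-1}{x^{6}-1}=\Phi_9(x)\Phi_{18}(x)\]
is reducible, contrary to hypothesis.
\item This leaves $x^{12}-x^6+1$. As a sanity check against Theorem \ref{Thm:Main d=2}, $x^4-x^2+1$ has $a \mmod 4=3$ and $\W=-3$ squarefree, so it is consistent.
\end{itemize}

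\emph{Step 3: the converse.} Since $x^{12}-x^6+1$ divides $x^{36}-1$ and has degree $\phi(36)=12$, the plan is to identify it with $\Phi_{36}(x)$.
\begin{itemize}
\item Its roots are the primitive $36$th roots of unity: they satisfy $\zeta^{6}$ is a primitive $6$th root of unity, hence $\zeta^{18}=-1$ and $\zeta^{12}\neq 1$.
\item Cyclotomic polynomials are irreducible, and $\Z[\zeta_{36}]$ is the full ring of integers of $\Q(\zeta_{36})$. So the polynomial is monogenic.
\item Its Galois group is $(\Z/36\Z)^\times\simeq(\Z/4\Z)^\times\times(\Z/9\Z)^\times\simeq C_2\times C_6$, which is abelian.
\end{itemize}

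The main obstacle is Step 1, the descent of monogenicity. The key point is that $\Z[\alpha]$ is a free $\Z[\theta]$-module with basis $1,\alpha,\ldots,\alpha^{m-1}$, so a witness to non-maximality of $\Z[\theta]$ lifts directly to a witness for $\Z[\alpha]$. The remaining steps are direct applications of the earlier theorems and standard facts about cyclotomic fields.
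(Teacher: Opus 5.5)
Your proof is correct, but it does not follow the paper. The paper gives no argument for this theorem; it cites \cite{Jones degree 12} for it, together with the cases $n=1,2,3$.

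\textbf{Your route.} You derive the $n=6$ case from the $n=3$ classification (Theorem~\ref{Thm:Main d=3}), using Lemma~\ref{Lem:Restrictions on n} for the Galois condition and your own descent argument for monogenicity. This is the same strategy the paper uses in the detailed case $n=2^r$, $r\ge 2$, of the proof of Theorem~\ref{Thm:Main}.

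\textbf{Why your Step 1 matters.} In the main proof the descent comes from Lemma~\ref{Lem:KKR}. That lemma explicitly excludes $n=6$: there ${\rm rad}(6)=6$, so $k=1$. Your Step 1 fills exactly this gap. The decomposition $\Z[\alpha]=\bigoplus_{i<m}\Z[\theta]\alpha^i$ shows that any witness $g(\theta)/p\notin\Z[\theta]$ is still a witness in $\Z[\alpha]$. This gives ``$f(x^m)$ monogenic $\Rightarrow$ $f(x)$ monogenic'' for every $m$. It is also the only direction of Lemma~\ref{Lem:KKR} the paper's argument actually uses, since the converse is always checked by identifying the trinomial with a cyclotomic polynomial.

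\textbf{Trade-off.} Your approach gives a short, self-contained reduction within the paper's own framework. Its cost is that it depends on the $n=3$ classification, which the paper also only cites. A direct treatment of $x^{12}+ax^6+b$ would not need that input.

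\textbf{Remaining steps.} You exclude $(a,b)=(1,1)$ via $x^{12}+x^6+1=\Phi_9(x)\Phi_{18}(x)$. You identify $x^{12}-x^6+1=\Phi_{36}(x)$, which matches item (7) of Theorem~\ref{Thm:Main} with $r=s=1$, and get Galois group $C_2\times C_6$. Both steps are right.
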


The next lemma is an application to our specific situation of a theorem due to Kaur, Kumar and Remete \cite[Theorem 1.1]{KKR}.  
\begin{lemma}\label{Lem:KKR}
    Let $n,a,b,r,s\in \Z$, such that $n=2^r3^s$ with $n\not \in \{1,2,3,6\}$ and $ab\ne 0$. Let $k=n/{\rm rad}(n)$. Then $k>1$ and  
    \[\FF_{n,a,b}(x)=\FF_{{\rm rad}(n),a,b}(x^k).\] If $\FF_{n,a,b}(x)$ is irreducible over $\Q$, then 
    $\FF_{n,a,b}(x)$ is monogenic if and only if all of the following conditions are satisfied: 
   \begin{enumerate}
   \item \label{KKR:I1} $b$ is squarefree,
    \item \label{KKR:I2} $q$ does not divide the {\rm index($\FF_{n,a,b}$)} for all prime divisors $q$ of $k$,
     \item \label{KKR:I3} $\FF_{{\rm rad}(n),a,b}(x)$ is monogenic. 
   \end{enumerate}
 \end{lemma}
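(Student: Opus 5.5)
This lemma is a specialization of \cite[Theorem 1.1]{KKR}. That theorem treats a trinomial of the form $g(x^k)$, where $g(x)=x^{2m}+ax^m+b$. Its criterion says: if $g(x^k)$ is irreducible, then it is monogenic if and only if $b$ is squarefree, no prime divisor of $k$ divides the index of $g(x^k)$, and $g(x)$ is monogenic. So the proof only needs to check that our situation fits that framework, with $m={\rm rad}(n)$ and $k=n/{\rm rad}(n)$, and then read off the three conditions.

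\emph{First step.} Show that $k>1$. Write $n=2^r3^s$, so ${\rm rad}(n)$ is one of $1,2,3,6$, according to which of $r,s$ are positive. Then $k=1$ exactly when $n={\rm rad}(n)$, which means $n$ is squarefree with prime factors among $2$ and $3$, so $n\in\{1,2,3,6\}$. These values are excluded by hypothesis, so $k>1$. (If $n=1$ one sets ${\rm rad}(1)=1$; this case is excluded anyway.)

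\emph{Second step.} Verify the factorization. Since ${\rm rad}(n)\cdot k=n$,
\[
\FF_{{\rm rad}(n),a,b}(x^k)=(x^k)^{2{\rm rad}(n)}+a(x^k)^{{\rm rad}(n)}+b=x^{2n}+ax^n+b=\FF_{n,a,b}(x).
\]
If $\FF_{n,a,b}(x)$ is irreducible, then $\FF_{{\rm rad}(n),a,b}(x)$ is irreducible as well: any factorization of it would give a factorization of $\FF_{n,a,b}(x)$ after substituting $x^k$. This is the same observation made in the Introduction.

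\emph{Third step.} Apply \cite[Theorem 1.1]{KKR} with these parameters. Its conditions translate directly into items \eqref{KKR:I1}--\eqref{KKR:I3}.

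\emph{Main obstacle.} Matching the hypotheses and normalization of \cite{KKR} exactly. That theorem may be phrased for $x^{2m}+ax^m+b$ with the prime-divisor-of-$k$ condition stated as explicit congruences rather than as an index condition. If so, I would restate its condition as ``$q\nmid{\rm index}(\FF_{n,a,b})$''. The justification is the Dedekind criterion at each prime $q\mid k$: that criterion characterizes precisely whether $q$ divides the index, so the congruence conditions and the index condition are equivalent. No further computation should be needed, since the remaining conditions transfer verbatim.
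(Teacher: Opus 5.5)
Your proposal is correct and takes the same approach as the paper: the paper also obtains this lemma purely as a specialization of \cite[Theorem 1.1]{KKR}, which is stated for a general monic $f(x^k)$ and is applied here with $f=\FF_{{\rm rad}(n),a,b}$ and $k=n/{\rm rad}(n)$. Your checks that $k>1$, that $\FF_{{\rm rad}(n),a,b}(x^k)=\FF_{n,a,b}(x)$, and that irreducibility passes down to $\FF_{{\rm rad}(n),a,b}$ are the routine verifications the paper leaves implicit.
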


\section{Proof of Theorem \ref{Thm:Main}}
\begin{proof}
Suppose first that $\FF_{n,a,b}(x)$ is abelian monogenic. Thus, $n=2^r3^s$, for some nonnegative integers $r$ and $s$, by Lemma \ref{Lem:Restrictions on n}. Then, utilizing Theorems \ref{Thm:Main d=1}, \ref{Thm:Main d=2}, \ref{Thm:Main d=3} and \ref{Thm:Main d=6}, along with Lemma \ref{Lem:KKR}, and considering the four cases
\begin{equation}\label{cases}
(r=0,s=0), \ \ (r \ge 1,s=0), \ \ (r=0,s \ge 1) \ \ \mbox{and} \ \ (r \ge 1,s \ge 1),
\end{equation} we arrive at the exact sets given in the statement of the theorem. 

Since the methods for each of the cases in \eqref{cases} are similar, we provide details only for the case $(r\ge 1, s=0)$, which will yield the sets in items \eqref{I2}, \eqref{I3} and \eqref{I4} in the statement of the theorem. 
If $r=1$, then $\FF_{2,a,b}(x)$ is monogenic by assumption. If $r\ge 2$, then, since 
\[\FF_{n,a,b}(x)=\FF_{2^r,a,b}(x)=\FF_{2,a,b}\left(x^{2^{r-1}}\right)\] is monogenic, it follows from Lemma \ref{Lem:KKR} that $\FF_{2,a,b}(x)$ is monogenic. That is, in either situation, we have that $\FF_{2,a,b}(x)$ is monogenic. Therefore, we deduce from Theorem \ref{Thm:Main d=2} the two possibilities for $(a,b)$: 
\begin{equation}\label{Poss}
(a,b)\in \{(\pm 4,2),(-5,2)\} \quad \mbox{or} \quad a\mmod{4}\in \{0,3\},\ b=1 \ \mbox{with}\ \W \ \mbox{squarefree}.
\end{equation} 
Hence, if $r=1$, then $(n,a,b)=(2,a,b)$ is contained in either the set of item \eqref{I2} or the set of item \eqref{I3}. If $r\ge 2$, then it is easy to verify, using a computer algebra system, that $\FF_{2^2,\pm 4,2}(x)$ and $\FF_{2^2,-5,2}(x)$ are nonabelian. Consequently, $\FF_{2^r,\pm 4,2}(x)$ and $\FF_{2^r,-5,2}(x)$ are nonabelian for any $r\ge 2$ by Lemma \ref{Lem:Restrictions on n}. For the second possibility in \eqref{Poss}, it follows from \cite{JonesMEOPGG} that $\FF_{2^2,a,1}(x)$ is abelian only when $a=-1$. Thus, in this situation we get $\FF_{2^2,-1,1}(x)=\Phi_{24}(x)$, so that $\FF_{2^2,-1,1}(x)$ is monogenic and $\Gal(\FF_{2^2,-1,1})\simeq C_2\times C_2\times C_2$. Therefore, $(n,a,b)=(2^r,-1,1)$ is contained in the set of item \eqref{I4} in the theorem since, when $r\ge 3$, we have $\FF_{2^r,-1,1}(x)=\Phi_{2^{r+1}\cdot 3}(x)$.

 Conversely, if $(n,a,b)$ is contained in one of the seven sets given in the statement of the theorem, then it is straightforward to confirm that $\FF_{n,a,b}(x)$ is abelian monogenic. For example, 
 if $(n,a,b)\in \{(2^r3^s,-1,1): \ rs\ge 1\}$, then it is easy to verify that  
 $\FF_{2^r3^s,-1,1}(x)=\Phi_{2^{r+1}3^{s+1}}(x)$. Hence,  
 \[\Gal(\FF_{2^r3^s,-1,1})\simeq C_2\times C_2\times C_{2^{r-1}}\times C_{3^s},\] and $\FF_{2^r3^s,-1,1}(x)$ is monogenic by \cite[Theorem 2.6]{Washington}. 
\end{proof}








\end{document}